\newtheorem{theorem}{Theorem}
\newtheorem{prop}{Claim}
\newtheorem*{conj*}{Conjecture}
\theoremstyle{definition}
\newtheorem*{example*}{Example}
\theoremstyle{remark}
\newcommand{\RR}{\mathbb{R}}
\begin{document}
\title{QR-submanifolds and Riemannian metrics with $G_2$ holonomy}
\author{Dmitry Egorov\footnote{This work was supported in part by Russian Foundation for Basic
Research (grant 09-01-00598-a) and the Council of the Russian
Federation Presidential Grants (projects NSh-7256.2010.1 and
MK-842.2011.1).}}
\date{}
\maketitle

\sloppy

\begin{abstract}
In this note we prove that QR-submanifolds of the hyper-K\"{a}hler
manifolds under some conditions admit the $G_2$ holonomy.
We give simplest examples of such QR-submanifolds namely  tori.

We conjecture that all $G_2$ holonomy manifolds arise in this way.
\end{abstract}

\section{Introduction}
The study of $G_2$-manifolds lacks  explicit examples of closed
manifolds. First complete Riemannian metrics with holonomy $G_2$ are
constructed by Bryant and Salamon in \cite{Bryant}. First compact
examples are given by Joyce in \cite{Joyce,Joyce2}. Later Kovalev
constructs more compact examples in \cite{Kovalev,Kovalev2}. Note
that metrics constructed in \cite{Joyce,Joyce2,Kovalev,Kovalev2} are
not explicit.

Lack of examples is a consequence of the fact that $G_2$-manifolds
are not generally algebraic in the broad sense of the term.

In this paper we try to partially explain this fact and conjecture
that $G_2$-manifolds are generally QR-submanifolds of
hyper-K\"{a}hler manifolds. Roughly speaking, QR-submanifolds are
real hypersurfaces of hyper-K\"{a}hler manifolds.

The author is grateful to Iskander Taimanov and Yaroslav Bazaikin
for helpful remarks and support.

\section{Preliminaries}\label{section_pre}
\subsection{$G_2$-structure}\label{section_cross}

Define a $3$-form $\Omega_0$ on $\RR^7$  by
\begin{equation}\label{g2_form}
\Omega_0 = x^{127}  + x^{136}  + x^{145} + x^{235} - x^{246} +
x^{347} + x^{567}.
\end{equation}
By $x^{ijk}$ denote the $x^i\wedge x^j\wedge x^k$. The subgroup of
$GL(7,\RR)$ preserving $\Omega_0$ and orientation is called the
$G_2$ group.

Let $M$ be an oriented closed $7$-manifold. Suppose there exists a
global $3$-form $\Omega$ such that pointwise it coincides with
$\Omega_0$; then $M$ is called a $G_2$-manifold or we say that $M$
carries the $G_2$-structure. It is known that the orientation and
the Riemannian metric are uniquely determined by the
$G_2$-structure.

\subsection{Cross products}
Let $M$ be a $G_2$-manifold. Suppose a multilinear alternating
smooth map $P:TM\times TM\to TM$. Suppose $P$ satisfies
compatibility conditions:
\begin{equation}\label{cross_def1}
g(P(e_1, e_2),e_i) = 0,\quad  i = 1,2;
\end{equation}
\begin{equation}\label{cross_def2}
\|P(e_1, e_2)\|^2 = \|e_1\|^2\|e_2\|^2 - g(e_1,e_2)^2,\quad \|e\|^2
= g(e,e).
\end{equation}
Then $P$ is called a  cross product. We also denote $P(e_1,e_2)$ by
$e_1\times e_2$.

The cross product is uniquely determined by the $3$-form $\Omega$:
\begin{equation}\label{form2prod} \Omega(e_1,e_2,e_3) = g(
P(e_1, e_2),e_3).
\end{equation}
Conversely, the cross product defines  the metric by the following
formula:
\begin{equation}\label{cross_induces_J}
P(e_1,P(e_1, e_2)) = -\|e_1\|^2e_2 + g(e_1,e_2)e_1.
\end{equation}
Using \eqref{form2prod}, we determine the $3$-form $\Omega$ from the
cross product and the metric. Thus the cross product implies the
$G_2$-structure and vice versa.

Recall that if cross product is parallel with respect to the metric
connection, then the holonomy group of $M$ is a subgroup of $G_2$
and coincides with $G_2$ iff $\pi_1(M)$ is a finite group
\cite{Joyce}.

\subsection{QR-submanifolds}
Riemannian $4n$-manifold with holonomy group contained in $Sp(n)$ is
called a hyper-K\"{a}hler manifold.

Suppose  $M$ is a submanifold of the hyper-K\"{a}hler $\overline{M}$
such that normal bundle of $M$ is the direct sum of $\nu$ and
$\nu^\perp$ and
\begin{equation}\label{qr} J_i\nu\subset \nu,\quad
J_i\nu^\perp\subset TM,\quad  i = 1,2,3,
\end{equation}
where by $J_i$ we denote the ith complex structure of
$\overline{M}$. Then $M$ is called a QR-submanifold of
$\overline{M}$.

In what follows we consider QR-submanifolds with $\dim\nu^\perp=1$
only. We  call them QR-submanifolds of the hypersurface type.

\section{The main result}
\label{section_main}

\begin{theorem}
Let  $M$ be an oriented  $7$-manifold. If $M$ is a hypersurface type
QR-submanifold of hyper-K\"{a}hler $\overline{M}$, then there exists
the $G_2$-structure on $M$.
\end{theorem}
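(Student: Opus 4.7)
The plan is to extract from the QR-structure a splitting $TM = \langle e_1,e_2,e_3\rangle\oplus D$ with $D$ a rank-$4$ quaternionic subbundle, and then mimic on $M$ the standard $G_2$ $3$-form on $\mathrm{Im}(\mathbb{H})\oplus\mathbb{H}\cong\RR^7$.

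First, because $M$ and $\overline{M}$ are oriented and the subbundle $\nu\subset T\overline{M}|_M$ inherits an orientation from its quaternionic action, the real line bundle $\nu^\perp$ is orientable and therefore trivial; fix a global unit section $\xi$ of $\nu^\perp$. By the QR-condition \eqref{qr}, the fields $e_i := J_i\xi$, $i=1,2,3$, lie in $TM$, and the quaternionic relations together with $\|\xi\|=1$ and $\xi\perp TM$ force $\{e_1,e_2,e_3\}$ to be orthonormal. Let $D\subset TM$ denote the orthogonal complement of $\langle e_1,e_2,e_3\rangle$; it has rank $4$. A short verification using \eqref{qr} shows $J_i(D)\subset D$: if $v\in D$, then $g(J_iv,\xi)=-g(v,e_i)=0$, $g(J_iv,w)=-g(v,J_iw)=0$ for $w\in\nu$, and $g(J_iv,e_j)=-g(v,J_iJ_j\xi)$ vanishes because the quaternionic relations put $J_iJ_j\xi$ in $\langle\xi,e_1,e_2,e_3\rangle$, which is $g$-orthogonal to $v$. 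Thus $D$ inherits a (fibrewise) hyper-K\"ahler structure from $\overline{M}$.

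Let $e^1,e^2,e^3$ be the covectors dual to $e_1,e_2,e_3$, and let $\omega_i$ denote the K\"ahler $2$-form $g(J_i\cdot,\cdot)$ restricted to $D$ and extended by zero on $\langle e_1,e_2,e_3\rangle$. Define the global $3$-form
\begin{equation*}
\Omega = e^1\wedge e^2\wedge e^3 + e^1\wedge\omega_1 + e^2\wedge\omega_2 + e^3\wedge\omega_3,
\end{equation*}
with signs on the last three terms chosen to match the conventions of \eqref{g2_form}. Smoothness of $\Omega$ is clear since $\xi$, $D$ and the $J_i$ are all smooth. To see that $\Omega$ defines a $G_2$-structure, fix $p\in M$, pick any unit $v\in D_p$, and note that $\{v,J_1v,J_2v,J_3v\}$ is an orthonormal basis of $D_p$; in the ordered frame $(v,J_1v,J_2v,J_3v,e_1,e_2,e_3)$ the expansion of $\Omega$ coincides, up to relabelling of indices, with $\Omega_0$. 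This furnishes the required pointwise identification of $(T_pM,\Omega_p)$ with $(\RR^7,\Omega_0)$.

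The only step that is not essentially automatic is the final pointwise comparison: one must verify that the specific linear combination above is in the $GL(7,\RR)$-orbit of $\Omega_0$. This is a fixed piece of linear algebra on $\mathrm{Im}(\mathbb{H})\oplus\mathbb{H}$, amounting to matching the three K\"ahler forms of the standard hyper-K\"ahler structure on $\mathbb{H}$ with the three "off-diagonal" summands of \eqref{g2_form}; the bookkeeping of signs is dictated by the orientations chosen for $\overline{M}$ and for $\nu$, and presents the principal (though purely mechanical) obstacle to writing out the argument.
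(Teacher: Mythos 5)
Your proposal is correct and is essentially the paper's construction in dual language: the paper defines the cross product on the adapted frame $\{\xi_i=J_in\}\cup\{\xi_\alpha\}$ via \eqref{P1}--\eqref{P3}, while you write down the corresponding $3$-form $e^{123}+\sum_i e^i\wedge\omega_i$ on $\langle J_1\xi,J_2\xi,J_3\xi\rangle\oplus D$, which encodes the same $G_2$-structure through \eqref{form2prod}. Along the way you make explicit some points the paper leaves implicit (triviality of $\nu^\perp$, the $J_i$-invariance of the rank-$4$ complement, and the pointwise identification with $\Omega_0$ up to the sign bookkeeping you flag), but the underlying argument is the same.
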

\begin{proof}
We shall construct a cross product on $M$ such that it is compatible
with the induced metric.

By \eqref{qr}, it follows that $\xi_i = J_in$ are $3$ non-vanishing
vector fields on $M$. This agrees with \cite{EmeryThomas}, where
existence of two non-vanishing vector fields on arbitrary compact
orientable $7$-manifold was shown. Third non-vanishing vector is the
cross product of the first two (see also \cite{Akbulut}).

We may assume that $\xi_i$ are unit orthogonal with respect to the
induced metric vector fields on $M$. Locally we extend $\xi_i$ to a
basis. Additional vectors are denoted by $\xi_\alpha$, i.e., by
Greek indices.

Let the cross product $P$ be given by the following formulae:
\begin{eqnarray}
\label{P1}&& P(\xi_i,\xi_j) = \xi_k,\quad (ijk)\in(123),\\
\label{P2}&& P(\xi_i,\xi_\alpha) = J_i(\xi_\alpha),\\
\label{P3}&& P(\xi_\alpha,J_i(\xi_\alpha)) = \xi_i.
\end{eqnarray}
By the definition of a hypersurface type QR-submanifold, we have
that for any $\xi_\alpha$, $\xi_\beta$ there exists complex
structure $J_i$ such that $J_i\xi_\alpha = \xi_\beta$. Hence
formulae \eqref{P1}--\eqref{P3} define the cross product on all
basis vectors.

Clearly, $P$ satisfies  \eqref{cross_induces_J} and therefore $P$ is
compatible with the induced metric.
\end{proof}

Let's find out when the constructed cross product is parallel that
is when holonomy is reduced to a subgroup of $G_2$.

Let $\overline{\nabla}$ and $\nabla$ be a metric connection on
$\overline{M}$ and $M$ respectively.

\begin{prop}
\begin{eqnarray}
\label{xi3} \nabla\xi_i &=&
J_i(\overline{\nabla}n) - b(\xi_i).\\
\label{J} (\nabla J_i)(\xi_\alpha) &=& J_i\circ b(\xi_\alpha) -
b\circ J_i(\xi_\alpha).
\end{eqnarray}
\end{prop}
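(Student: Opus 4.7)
The plan is to combine two standard facts. First, each complex structure $J_i$ on the hyper-K\"{a}hler manifold $\overline{M}$ is $\overline{\nabla}$-parallel, so $\overline{\nabla}_X(J_i V) = J_i \overline{\nabla}_X V$ for any $V$ along $M$. Second, the Gauss formula $\overline{\nabla}_X Y = \nabla_X Y + b(X, Y)$ holds for $X, Y \in TM$, where $b$ is the second fundamental form of $M \subset \overline{M}$ with values in the normal bundle $\nu \oplus \nu^\perp$. I read the expressions $b(\xi_i)$ and $b(\xi_\alpha)$ in the statement as the normal-bundle-valued operators $X \mapsto b(X, \xi_i)$ and $X \mapsto b(X, \xi_\alpha)$.

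For \eqref{xi3}, I would differentiate $\xi_i = J_i n$ along an arbitrary $X \in TM$: parallelism gives $\overline{\nabla}_X \xi_i = J_i \overline{\nabla}_X n$, while the Gauss formula gives $\overline{\nabla}_X \xi_i = \nabla_X \xi_i + b(X, \xi_i)$. Equating the two and solving for $\nabla_X \xi_i$ produces the formula directly.

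For \eqref{J}, the analogous computation applied to $J_i \xi_\alpha$ gives $\overline{\nabla}_X(J_i \xi_\alpha) = J_i \overline{\nabla}_X \xi_\alpha$. Applying Gauss to each side and rearranging yields
\begin{equation*}
\nabla_X(J_i \xi_\alpha) - J_i \nabla_X \xi_\alpha = J_i b(X, \xi_\alpha) - b(X, J_i \xi_\alpha),
\end{equation*}
and the left-hand side is by definition $(\nabla_X J_i)(\xi_\alpha)$.

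The main obstacle here is notational rather than computational. The ambient expressions $J_i \overline{\nabla}_X n$ and $J_i \nabla_X \xi_\alpha$ generally split into tangent and $\nu$-components, so both displayed identities must be interpreted as equalities in $T\overline{M}|_M$. Before writing anything down I would fix conventions and verify, using $J_i \nu \subset \nu$, $J_i \nu^\perp \subset TM$, and the orthogonality $\xi_\alpha \perp \xi_i$ (which forces $J_i \xi_\alpha \in TM$), that every piece lands in a predictable sub-bundle. Once that is in place, the two formulae are essentially immediate from the Gauss equation together with the $\overline{\nabla}$-parallelism of $J_i$.
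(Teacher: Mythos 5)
Your proposal is correct and follows the same route as the paper: both identities are obtained by computing $\overline{\nabla}$ of $\xi_i=J_in$ and of $J_i\xi_\alpha$ in two ways, once via the Gauss formula and once via the $\overline{\nabla}$-parallelism of $J_i$, and then equating. Your added remark about checking in which sub-bundles the various terms land is a sensible precision the paper leaves implicit, but it does not change the argument.
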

\begin{proof}
By the Gauss formula,  we have
\begin{equation}\label{xi1}\overline{\nabla}\xi_i
= \nabla\xi_i + b(\xi_i),\end{equation} where $b(\xi_i) = b(\xi_i,
\cdot)$ and  $b$ is the second fundamental form.

Also, the definition of the hyper-K\"{a}hler manifold implies that
\begin{equation}\label{xi2}
\overline{\nabla}\xi_i = \overline{\nabla}J_i(n) =
(\overline{\nabla}J_i)(n) + J_i(\overline{\nabla}n) =
 J_i(\overline{\nabla}n).
\end{equation}
Combining \eqref{xi1} and \eqref{xi2}, we get \eqref{xi3}.

Similarly, combining
\begin{equation}\label{J1}
\overline{\nabla}(J_i\xi_\alpha) = \nabla(J_i(\xi_\alpha)) +
b(J_i(\xi_\alpha)) = (\nabla J_i)(\xi_\alpha) + J_i(\nabla
\xi_\alpha) + b(J_i(\xi_\alpha))
\end{equation}
and
\begin{equation}\label{J2}
 \overline{\nabla}(J_i\xi_\alpha) = (\overline{\nabla}J_i)(\xi_\alpha)
+ J_i(\overline{\nabla}\xi_\alpha) =
J_i(\overline{\nabla}\xi_\alpha),
\end{equation}
we have \eqref{J}.
\end{proof}

By definition, put
$$
X_i(\xi) = J_i(\overline{\nabla}_\xi n) - b(J_in,\xi),\quad
Y_i(\xi,\eta) = J_i b(\xi,\eta) - b( \xi, J_i\eta).
$$

\begin{prop}
\begin{eqnarray}
\label{calc1} (\nabla P)(\xi_i,\xi_j) &=& X_k - X_i\times\xi_j -
\xi_i\times X_j.\\
\label{calc2} (\nabla P)(\xi_i,\xi_\alpha) &=& Y_i(\xi_\alpha) -
X_i\times\xi_\alpha.\\
\label{calc3}(\nabla P)(\xi_i,\xi_\alpha) &=& Y_i(\xi_\alpha) -
X_i\times\xi_\alpha.
\end{eqnarray}
\end{prop}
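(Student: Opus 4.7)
The plan is to apply the Leibniz rule for the covariant derivative of the $(1,2)$-tensor $P$ to each of the defining relations \eqref{P1}--\eqref{P3}, and then substitute the two identities from the previous proposition. I would read \eqref{xi3} as $\nabla \xi_i = X_i$ and \eqref{J} as $(\nabla J_i)(\xi_\alpha) = Y_i(\xi_\alpha)$, both regarded as $(1,1)$-tensors in the argument omitted on the right-hand side of their defining formula, so that the $X$- and $Y$-notation lands in exactly the spot where each substitution is made.

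For \eqref{calc1} I would take $u=\xi_i$, $v=\xi_j$ and use \eqref{P1} together with the tensorial Leibniz rule
$$(\nabla_w P)(u,v)=\nabla_w P(u,v) - P(\nabla_w u, v) - P(u, \nabla_w v)$$
to obtain $\nabla\xi_k - (\nabla\xi_i)\times\xi_j - \xi_i\times(\nabla\xi_j)$; three applications of \eqref{xi3} then yield $X_k - X_i\times\xi_j - \xi_i\times X_j$. For \eqref{calc2} I would take $u=\xi_i$, $v=\xi_\alpha$ and use \eqref{P2}, expanding $\nabla(J_i\xi_\alpha)=(\nabla J_i)(\xi_\alpha)+J_i(\nabla\xi_\alpha)$; the two copies of $J_i(\nabla\xi_\alpha)$ on the right-hand side, one from this expansion and one from $P(\xi_i,\nabla\xi_\alpha)$ via \eqref{P2} extended linearly, cancel and leave $Y_i(\xi_\alpha) - X_i\times\xi_\alpha$. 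The third identity---which as printed duplicates the second and I read as a misprint for the remaining case $P(\xi_\alpha,J_i\xi_\alpha)=\xi_i$ of \eqref{P3}---is handled identically: differentiating the left-hand side gives $X_i$ by \eqref{xi3}, while expanding the right produces the $Y$- and cross-product terms after invoking \eqref{J} and \eqref{P2}.

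The main obstacle is verifying that $P(\xi_i,\cdot)$ agrees with $J_i$ on an arbitrary tangent vector, not merely on a basis vector $\xi_\alpha$, so that the cancellation in \eqref{calc2} is legitimate. Since $J_i\xi_j$ equals $\pm\xi_k$ as a consequence of $J_iJ_j=\pm J_k$ applied to $n$, the prescription \eqref{P1} is compatible with \eqref{P2} under the standard quaternionic sign convention, and linear extension then identifies $P(\xi_i,\cdot)$ with $J_i$ on all of $TM$ (after projecting out the normal component where it appears). With this compatibility in hand the three identities reduce to routine Leibniz-rule calculations with no further difficulty.
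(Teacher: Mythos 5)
Your proposal is correct and follows essentially the same route as the paper: differentiate the defining relations \eqref{P1}--\eqref{P3} with the Leibniz rule for the $(1,2)$-tensor $P$ and substitute \eqref{xi3} and \eqref{J}, which is exactly what the author does (including treating \eqref{calc3} as the companion of \eqref{calc2}). Your added remark on why $P(\xi_i,\cdot)$ agrees with $J_i$ beyond the basis vectors only makes explicit a point the paper leaves implicit.
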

\begin{proof}
Let's prove \eqref{calc1}. We differentiate \eqref{P1}:
\begin{equation}\label{calc11}
(\nabla P)(\xi_i,\xi_j) =  \nabla\xi_k - P(\nabla\xi_i,\xi_j) -
 P(\xi_i,\nabla\xi_j).
\end{equation}
Combining \eqref{xi3}, \eqref{calc11} and \eqref{P1}, we obtain
\eqref{calc1}.

Similarly, if we differentiate \eqref{P2} and \eqref{P3}, we get
\eqref{calc2} and \eqref{calc3}.

\end{proof}

Recall that $\nabla P=0$ implies that $\mathrm{Hol}(M)\subset G_2$.
If we equate with zero formulae \eqref{calc1}--\eqref{calc3}, then
we obtain sufficient conditions for $\nabla P=0$. Note that
\eqref{calc2} and \eqref{calc3} are equivalent.

\begin{theorem}\label{thm2}
Suppose  $M$ is an oriented  $7$-manifold such that  $M$ is a
hypersurface type QR-submanifold of the hyper-K\"{a}hler
$\overline{M}$.
If the following equations
hold:
\begin{equation}\label{final1}
X_k(\xi) - X_i(\xi)\times \xi_j - \xi_i\times X_j(\xi)=0,
\end{equation}
\begin{equation}\label{final2}
Y_i(\xi,\eta) - X_i(\xi)\times\eta = 0,
\end{equation}
for any $\xi,\eta,J_i\eta\in\Gamma(TM)$, $i=1,2,3$; then holonomy
group of $M$ is contained in $G_2$.
\end{theorem}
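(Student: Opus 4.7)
The plan is to deduce $\nabla P = 0$ from the hypotheses and then invoke the fact, recalled after \eqref{cross_induces_J}, that parallelism of the cross product forces the holonomy into a subgroup of $G_2$. Since $\nabla P$ is tensorial in its two arguments, it suffices to verify the vanishing on pairs of local basis vectors. Using the splitting of the tangent frame into the Latin fields $\xi_i$ and the Greek fields $\xi_\alpha$, these pairs fall into three types: Latin--Latin, mixed, and Greek--Greek.

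For the Latin--Latin and mixed types the previous claim does all the work: equations \eqref{calc1} and \eqref{calc2} express $(\nabla P)(\xi_i,\xi_j)$ and $(\nabla P)(\xi_i,\xi_\alpha)$ as precisely the left-hand sides of \eqref{final1} and \eqref{final2}. The hypotheses therefore give the vanishing on these two types of basis pairs directly, with no further computation.

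The one type not directly covered is the Greek--Greek case $(\xi_\alpha, \xi_\beta)$, and this is where the real work lies. I would use the defining property of a hypersurface-type QR-submanifold exactly as in the proof of Theorem~1: because $\dim \nu^{\perp} = 1$, every $\xi_\beta$ can be written as $J_i\xi_\alpha$ for some $i\in\{1,2,3\}$. Starting from \eqref{P3} I would differentiate, expand $\nabla\xi_i$ via \eqref{xi3} and $\nabla(J_i\xi_\alpha) = (\nabla J_i)\xi_\alpha + J_i\nabla\xi_\alpha$ with $\nabla J_i$ given by \eqref{J}, and regroup. The aim is to show that the resulting expression for $(\nabla P)(\xi_\alpha, J_i\xi_\alpha)$ reassembles into a linear combination of $X_\bullet$- and $Y_\bullet$-type quantities that already vanish by \eqref{final1}--\eqref{final2}. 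Once this is verified, $\nabla P$ vanishes on every basis pair, hence identically, and the holonomy reduction from the preliminaries completes the argument.

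The main obstacle is the bookkeeping in this last step. The previous claim already implicitly flags this by labelling \eqref{calc2} and \eqref{calc3} identically, suggesting that an algebraic identity collapses the Greek--Greek case into the mixed case without producing genuinely new conditions; but the manipulation is not exhibited. Carrying it out carefully, so that the terms coming from $\nabla J_i$ exactly cancel against those coming from $b\circ J_i - J_i\circ b$, is in my view the only substantive point, the rest being routine verification of tensoriality and a direct appeal to the stated holonomy reduction principle.
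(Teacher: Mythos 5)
Your proposal takes essentially the same route as the paper: Theorem \ref{thm2} is obtained there as an immediate consequence of the second Claim together with the fact that $\nabla P=0$ forces $\mathrm{Hol}(M)\subset G_2$, and the Greek--Greek case you single out is exactly what the paper's \eqref{calc3} (obtained by differentiating \eqref{P3} and noted to be equivalent to \eqref{calc2}) is meant to cover. The only difference is that you propose to carry out that differentiation explicitly, a computation which the paper, like your write-up, does not exhibit.
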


\begin{example*}
Simplest examples of QR-submanifolds with holonomy contained in
$G_2$ are totally geodesic hypersurfaces. These are  flat tori:
$T^7\hookrightarrow T^8$ and $T^3\times K3\hookrightarrow T^4\times
K3$.
\end{example*}

\section{Conjecture}\label{section_conj}
Emery Thomas proves in \cite{EmeryThomas} that any $G_2$-manifold
admits $3$ non-vanishing unit vector fields $\xi_i$. There exists a
complex structure on $\xi_i^\perp$ determined by
\eqref{cross_induces_J}.
Verbitsky shows in  \cite{Verb} that these complex structures are
integrable iff the holonomy is contained  in $G_2$. Due to
integrability we formulate the following
\begin{conj*}
Any $G_2$ holonomy  manifold is a QR-submanifold  of a certain
hyper-K\"{a}hler manifold satisfying the conditions of Theorem
\ref{thm2}.
\end{conj*}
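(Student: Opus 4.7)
The plan is to construct, given a $G_2$ holonomy manifold $M^7$, an $8$-dimensional hyper-K\"ahler manifold $\overline{M}$ containing $M$ as a hypersurface-type QR-submanifold satisfying \eqref{final1}--\eqref{final2}, by a Hitchin-type evolution/extension argument on a tubular neighborhood $M\times(-\epsilon,\epsilon)$.

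First I would set up initial data on $M$. By Emery Thomas and Verbitsky's integrability theorem cited just before the conjecture, $M$ carries three non-vanishing unit vector fields $\xi_1,\xi_2,\xi_3$ together with integrable almost complex structures on the rank-$4$ distributions $\xi_i^\perp\subset TM$. Working on $\overline{M}:=M\times(-\epsilon,\epsilon)$ with coordinate $t$, I declare $n:=\partial_t$ the unit normal and define endomorphisms $J_i$ of $T\overline{M}|_{t=0}$ by $J_i n:=\xi_i$, $J_i\xi_i:=-n$, $J_i\xi_j:=\xi_k$ for cyclic $(ijk)$, and by the Verbitsky complex structures on the rank-$4$ subspace $V\subset TM$ orthogonal to $\mathrm{span}(\xi_1,\xi_2,\xi_3)$. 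The first checkpoint is pointwise verification that $\{J_i\}$ form a quaternionic triple; this should reduce to algebraic identities satisfied by the standard $G_2$ form $\Omega_0$ in \eqref{g2_form} and by \eqref{cross_induces_J}.

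Next, extend the three K\"ahler forms $\Omega_i:=\overline{g}(J_i\cdot,\cdot)$ to the neighborhood by solving an evolution system of Hitchin-flow type, the analogue for hyper-K\"ahler of the flow producing $\Spin$ holonomy from a $G_2$ hypersurface. Writing $\Omega_i=\omega_i(t)+\alpha_i(t)\wedge dt$, the closedness condition $d\Omega_i=0$ decouples into $d\omega_i=0$ and $\partial_t\omega_i=d\alpha_i$, while the quaternionic compatibility $\Omega_i\wedge\Omega_i=\Omega_j\wedge\Omega_j$ and $\Omega_i\wedge\Omega_j=0$ for $i\neq j$ should fix $\alpha_i$ algebraically from $\omega_i$. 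The initial data $\omega_i|_{t=0}$ are determined by the $G_2$-structure on $M$, and their closedness should follow from $\nabla\Omega=0$ combined with the decomposition of $\Omega$ along $\mathrm{span}(\xi_1,\xi_2,\xi_3)$ and $V$. Once the flow is solved for small $t$, $\overline{M}$ is hyper-K\"ahler and $M\subset\overline{M}$ is by construction a hypersurface with $J_in=\xi_i\in TM$; the conditions \eqref{final1}--\eqref{final2} of Theorem \ref{thm2} then reduce to $\nabla P=0$ on $M$, which holds because the cross product induced on $M$ from $\overline{M}$ coincides with the parallel cross product of the original $G_2$-structure.

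The main obstacle is the existence of the hyper-K\"ahler extension. The flow equations are overdetermined: preserving quaternionic compatibility along the flow forces nontrivial integrability constraints on the initial data, essentially demanding that the three Verbitsky complex structures on $V$ fit into a globally consistent quaternionic family on $\overline{M}$. Even in the real-analytic category a Cauchy--Kowalevski argument would require delicate algebraic identities linking $\omega_i(0)$, $\alpha_i(0)$ and the second fundamental form of $M\hookrightarrow\overline{M}$; and globally the rigidity of hyper-K\"ahler holonomy in dimension $8$, whose compact examples are essentially $T^4$, $K3$ and their products or hyper-K\"ahler quotients, may present genuine topological obstructions for most compact $G_2$ manifolds, so I would expect the conjecture to require at minimum a non-compact or local version of $\overline{M}$.
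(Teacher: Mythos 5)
The statement you are addressing is stated in the paper only as a conjecture: the author offers no proof, just the motivation via Thomas' three non-vanishing vector fields and Verbitsky's integrability result, so your proposal must stand entirely on its own. As it stands it is a programme rather than a proof, and the single step that carries all the content --- existence of a hyper-K\"ahler structure on $M\times(-\epsilon,\epsilon)$ inducing the given structure on $M$ --- is precisely what the conjecture asserts; you acknowledge you do not have it. The Hitchin-type evolution you invoke is not known to do this job: evolving one cocalibrated $G_2$-structure on a $7$-manifold yields (only in the real-analytic category, and only for small $t$) a metric with holonomy contained in $\Spin$, not in $Sp(2)$; to get a hyper-K\"ahler extension you must evolve a triple of $2$-forms subject to the quaternionic compatibility relations, and you give no argument that these constraints are satisfiable at $t=0$, are propagated by the flow, or fall under any existence theorem. (Smaller issues: $\xi_i^\perp\subset TM$ has rank $6$, not $4$ --- Verbitsky's structures are CR structures on these rank-$6$ distributions --- and the quaternion relations for your $J_i$ on the rank-$4$ piece $V$ need sign care because the $G_2$ cross product is non-associative; also the paper states \eqref{final1}--\eqref{final2} only as sufficient for $\nabla P=0$, so you would still need to check them directly rather than deduce them from parallelism.)

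More seriously, your closing step --- that \eqref{final1}--\eqref{final2} ``reduce to $\nabla P=0$ on $M$, which holds because the induced cross product coincides with the original parallel one'' --- is exactly where a genuine obstruction sits in the codimension-one setting you chose. A hypersurface of any manifold with holonomy in $\Spin$ (in particular of a hyper-K\"ahler $8$-manifold) inherits a cocalibrated $G_2$-structure whose remaining torsion is identified with the second fundamental form; it is torsion-free only when the hypersurface is totally geodesic. And if $M$ is totally geodesic, the paper's own formula \eqref{xi3} gives $\nabla\xi_i=0$, so $\xi_1,\xi_2,\xi_3$ are parallel and $\mathrm{Hol}(M)$ drops into $SU(2)$ --- consistent with the paper's examples $T^7\subset T^8$ and $T^3\times K3\subset T^4\times K3$, but incompatible with holonomy exactly $G_2$. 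So even granting the existence of your extension, arranging that the induced structure coincide with the original torsion-free one contradicts the hypothesis of full $G_2$ holonomy in the $8$-dimensional ambient case. Any viable attack on the conjecture therefore has to use genuinely higher codimension, i.e.\ $\dim\overline{M}\ge 12$ with $\nu\neq 0$ in \eqref{qr}, where the torsion-versus-second-fundamental-form identification no longer pins everything down --- a case your construction does not address at all.
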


\vskip5mm {\sc Ammosov Northeastern Federal University, Yakutsk,
677000,
Russia}\\
\textit{e-mail:}{\tt egorov.dima@gmail.com}

\end{document}